\theoremstyle{remark}
\newtheorem{example}{Example}[section]
\theoremstyle{plain}
\newtheorem{proposition}[example]{Proposition}
\newtheorem{corollary}[example]{Corollary}
\newtheorem{theorem}[example]{Theorem}
\newcommand{\ZZ}{\ensuremath{{\mathbb Z}}}
\newcommand{\QQ}{\ensuremath{\mathbb Q}}
\newcommand{\CC}{\ensuremath{\mathbb C}}
\newcommand{\PP}{\ensuremath{\mathbb P}}
\newcommand{\gm}{{\mathbb G}_m}
\newcommand{\sO}{{\mathcal O}}
\newcommand{\sF}{{\mathcal F}}
\newcommand{\sE}{{\mathcal E}}
\newcommand{\sL}{{\mathcal L}}
\newcommand{\sG}{{\mathcal G}}
\newcommand{\sK}{{\mathcal K}}
\newcommand{\sN}{{\mathcal N}}
\newcommand{\stX}{{\mathfrak X}}
\newcommand{\coh}[2]{{\rm H}^{#1}(#2)}
\newcommand{\sV}{{\mathcal V}}
\renewcommand{\AA}{\mathbb A}
\newcommand{\et}{{\rm et}}
\newcommand{\vect}{{\rm Vect}}
\newcommand{\op}{{\rm op}}
\newcommand{\red}{{\rm red}}
\newcommand{\pardeg}{{\rm deg}_{\rm par}}
\newcommand{\rk}{{\rm rk}}
\renewcommand{\c}{{\rm c}^\et}
\newcommand{\ch}{{\rm ch}^\et}
\newcommand{\td}{{\rm td}^\et}
\newcommand{\chrep}{{\rm ch}^{\rm rep}}
\newcommand{\tdrep}{{\rm td}^{\rm rep}}
\newcommand{\sT}{{\mathcal T}}
\begin{document}

\title[Semistability criterion for parabolic bundles]{Semistability
criterion for parabolic vector bundles on curves}

\author[I. Biswas]{Indranil Biswas}

\address{School of Mathematics, Tata Institute of Fundamental
Research, Homi Bhabha Road, Bombay 400005, India}

\email{indranil@math.tifr.res.in}

\author[A. Dhillon]{Ajneet Dhillon}

\address{Department of Mathematics, University of Western Ontario,
London, Ontario N6A 5B7, Canada}

\email{adhill3@uwo.ca}

\subjclass[2000]{14F05, 14H60}

\keywords{Parabolic bundle, root stack, semistability, cohomology}

\date{}

\begin{abstract}
We give a cohomological criterion for a parabolic vector bundle on
a curve to be semistable. It says that a parabolic vector bundle
$\sE_*$ with rational parabolic weights is semistable if and only 
if there is another parabolic
vector bundle $\sF_*$ with rational parabolic 
weights such that the cohomologies of the vector bundle 
underlying the parabolic tensor product $\sE_*\otimes\sF_*$ 
vanish. This criterion generalizes the known semistability 
criterion of Faltings for vector bundles on curves and
significantly improves the result in \cite{biswas:07}.
\end{abstract}

\maketitle

\section{Introduction}

We will work over an algebraically closed ground field of 
characteristic zero.

Let $X$ be an irreducible smooth projective curve. A theorem due to 
Faltings says that a vector bundle $E$ over $X$ is semistable if and
only if there is a vector bundle $F$ over $X$ such that
$\coh{0}{X,\, E\otimes F}\,=\,0\,=\, \coh{1}{X,\, E\otimes F}$
(see \cite[p. 514, Theorem 1.2]{faltings:94} and \cite[p. 516, 
Remark]{faltings:94}). Let $D$ be a reduced effective 
divisor on $X$.
For a parabolic vector bundle $W_*$ on $X$ with 
parabolic divisor $D$, the underlying vector bundle will be 
denoted by $W_0$; see \cite{MS}, \cite{MY} for parabolic vector
bundles. Let $r$ be
a positive integer. Denote by $\vect(X,D,r)$ the category of parabolic
vector bundles on $X$ with parabolic
structure along $D$ and parabolic weights being integral multiples
of $1/r$. In \cite{biswas:07} the following theorem was proved:

\begin{theorem}\label{th1}
There is a parabolic vector bundle $\sV_*\,\in\,\vect(X,D,r)$ with the
following property: A parabolic vector bundle $\sE_*$ is semistable
if and only if there is a parabolic vector bundle 
$\sF_*\in\vect(X,D,r)$
with $\coh{0}{X,\, (\sE_*\otimes\sV_*\otimes\sF_*)_0}\,=\,0\,=\,
\coh{1}{X,\, (\sE_*\otimes\sV_*\otimes\sF_*)_0}$, where
$(\sE_*\otimes\sV_*\otimes\sF_*)_*$ is the parabolic tensor product.
\end{theorem}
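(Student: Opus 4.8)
The plan is to transport the statement to the Galois cover attached to the parabolic weights and then invoke Faltings' theorem \cite{faltings:94} there. Fix a connected smooth projective curve $Y$ carrying a faithful action of a finite group $\Gamma$, together with a $\Gamma$-invariant morphism $p\colon Y\to X$ that is étale over $X\setminus D$ and ramified of index exactly $r$ over every point of $D$; such a cover exists over our ground field. Under the standard correspondence (see \cite{MS}), $\vect(X,D,r)$ is equivalent, as a tensor category, to the category of $\Gamma$-equivariant vector bundles on $Y$: the parabolic tensor product corresponds to the ordinary tensor product of $\Gamma$-bundles; for a parabolic bundle $\sW_*$ with associated $\Gamma$-bundle $W$ one has $W_0=(p_*W)^{\Gamma}$, so $\coh{i}{X,W_0}=\coh{i}{Y,W}^{\Gamma}$ for all $i$; and $\pardeg\sW_*=\tfrac1{|\Gamma|}\deg_Y W$, so $\sW_*$ is parabolic semistable if and only if $W$ is $\Gamma$-semistable, which is equivalent to semistability of the underlying ordinary bundle $W^{\flat}$ on $Y$ because the Harder--Narasimhan filtration is canonical, hence $\Gamma$-invariant. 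I take $\sV_*$ to be the parabolic bundle corresponding to the $\Gamma$-bundle $\sO_Y\otimes_{\CC}\CC[\Gamma]$, with $\Gamma$ acting through the regular representation on $\CC[\Gamma]$; equivalently, $\sV_*$ has underlying bundle $p_*\sO_Y$ with the parabolic structure induced by the regular representation at the branch points. The point of this choice is that $\sV_*$, and hence $\sE_*\otimes\sV_*\otimes\sF_*$ for all $\sE_*,\sF_*$, has \emph{regular isotypic type} at every point of $D$ (every character of the local inertia occurs with multiplicity $\tfrac1r$ of the rank), so that the parabolic weight correction term in Riemann--Roch becomes a universal constant $K=\tfrac{(r-1)|D|}{2r}$.

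\emph{Sufficiency.} This direction goes as follows: suppose $\sF_*\in\vect(X,D,r)$ satisfies $\coh{0}{X,(\sE_*\otimes\sV_*\otimes\sF_*)_0}=0=\coh{1}{X,(\sE_*\otimes\sV_*\otimes\sF_*)_0}$, and suppose for contradiction that some parabolic subbundle $\sE'_*\subset\sE_*$ has $\pardeg\sE'_*/\rk\sE'_*>\pardeg\sE_*/\rk\sE_*$. Tensoring the inclusion by the locally free $\sV_*\otimes\sF_*$ and applying the exact functor $(-)_0$ gives $(\sE'_*\otimes\sV_*\otimes\sF_*)_0\hookrightarrow(\sE_*\otimes\sV_*\otimes\sF_*)_0$, whence $\coh{0}{X,(\sE'_*\otimes\sV_*\otimes\sF_*)_0}=0$ and $\chi\bigl(X,(\sE'_*\otimes\sV_*\otimes\sF_*)_0\bigr)\le 0$, while $\chi\bigl(X,(\sE_*\otimes\sV_*\otimes\sF_*)_0\bigr)=0$. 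For any parabolic bundle $\sG_*$ of regular isotypic type one has $\deg(\sG_*)_0=\pardeg\sG_*-K\rk\sG_*$, so Riemann--Roch on $X$ reads $\chi(X,(\sG_*)_0)=\rk\sG_*\bigl(\pardeg\sG_*/\rk\sG_*+1-g-K\bigr)$. Applying this to $\sG_*=\sE_*\otimes\sV_*\otimes\sF_*$ and to $\sG_*=\sE'_*\otimes\sV_*\otimes\sF_*$ (both of regular isotypic type, since $\sV_*$ is), using additivity of the parabolic slope under tensor product, and subtracting, the two relations force $\pardeg\sE'_*/\rk\sE'_*\le\pardeg\sE_*/\rk\sE_*$, a contradiction.

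\emph{Necessity.} Suppose $\sE_*$ is semistable, with associated $\Gamma$-bundle $E$ on $Y$, so $E^{\flat}$ is semistable on $Y$. Because $\sV_*$ corresponds to $\sO_Y\otimes\CC[\Gamma]$, for any $\sF_*$ with associated $\Gamma$-bundle $F$ we have $\coh{i}{X,(\sE_*\otimes\sV_*\otimes\sF_*)_0}\cong\bigl(\coh{i}{Y,E\otimes F}\otimes_{\CC}\CC[\Gamma]\bigr)^{\Gamma}$, and this has dimension $\dim\coh{i}{Y,E^{\flat}\otimes F^{\flat}}$ since $(M\otimes\CC[\Gamma])^{\Gamma}$ has dimension $\dim M$ for every representation $M$ of $\Gamma$. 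Now choose $F=\bigoplus_{\gamma\in\Gamma}\gamma^{*}B$ for an ordinary bundle $B$ on $Y$; this is $\Gamma$-equivariant, and since the equivariant structure of $E$ gives $\gamma^{*}E^{\flat}\cong E^{\flat}$ for all $\gamma$, we get $\coh{i}{Y,E^{\flat}\otimes F^{\flat}}\cong\coh{i}{Y,E^{\flat}\otimes B}^{\oplus|\Gamma|}$. As $E^{\flat}$ is semistable, Faltings' theorem \cite{faltings:94} furnishes $B$ on $Y$ with $\coh{0}{Y,E^{\flat}\otimes B}=0=\coh{1}{Y,E^{\flat}\otimes B}$; for the corresponding $\sF_*$ both cohomology groups in the statement vanish.

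\emph{Main obstacle.} The only substantive input is Faltings' theorem on the honest curve $Y$, used as a black box. The genuinely new ingredients are the dictionary between $\Gamma$-invariant cohomology on $Y$ and cohomology of underlying parabolic bundles on $X$ (and between the two notions of semistability) and --- the crux --- the choice of the auxiliary $\sV_*$ of regular isotypic type, which is precisely what turns the a priori $\sE_*$-dependent weight correction in Riemann--Roch into the single constant $K$, so that the slope comparison in the sufficiency direction closes up. The remaining points --- existence of the cover, exactness and tensor-compatibility of $(-)_0$, that $\bigoplus_{\gamma}\gamma^{*}B$ descends to an object of $\vect(X,D,r)$, the identity $(M\otimes\CC[\Gamma])^{\Gamma}\cong\CC^{\dim M}$, and the value of $K$ --- are routine. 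As a consistency check, when $D=\emptyset$ the cover is trivial, $\sV_*=\sO_X$, $K=0$, and the statement reduces to Faltings' criterion.
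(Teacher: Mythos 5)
Your argument is, in substance, the proof the paper attributes to \cite{biswas:07}: the paper itself only cites that reference (and \cite{Pa}) for Theorem \ref{th1}, and its remark that $\sV_*$ ``depends upon the choice of a suitable ramified Galois covering $Y\longrightarrow X$'' is exactly your construction, with $\sV_*$ the parabolic bundle corresponding to the regular representation (equivalently $p_*\sO_Y$). Both directions of your argument are essentially sound: the dictionary $\coh{i}{X,W_0}=\coh{i}{Y,W}^{\Gamma}$, the identity $(M\otimes\CC[\Gamma])^{\Gamma}\cong M$, the use of $\bigoplus_\gamma\gamma^*B$ with $B$ from Faltings, and the sufficiency computation via Riemann--Roch on $X$ (the ``regular isotypic type'' normalization correctly turns the weight contribution into the constant $K=\tfrac{(r-1)|D|}{2r}$, which is what makes the slope comparison close).

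There is, however, one genuine gap: the assertion that a Galois cover $p\colon Y\to X$, \'etale over $X\setminus D$ and ramified of index exactly $r$ over each point of $D$, ``exists over our ground field'' is false precisely when $X=\PP^1$ and $|D|=1$ --- this is the content of Theorem \ref{th-na} and Corollary \ref{cor} in the paper, and it is why the proof of Theorem \ref{t:faltings} splits into two cases. Since your $\sV_*$ is \emph{defined} via the cover, the entire construction is vacuous in that case, and Theorem \ref{th1} as stated here carries no such exclusion. The repair is not hard but must be said: in that case take $\sV_*=\bigoplus_{i=0}^{r-1}\sL_{i/r}$ where $\sL_{i/r}$ is $\sO_X$ with weight $i/r$ at the parabolic point (still of regular isotypic type, so your sufficiency argument survives verbatim without any cover), and for necessity use the classification of Example \ref{e:semi}: a semistable $\sE_*$ is $({\mathcal L}_*)^{\oplus n}$, so an explicit $\sF_*$ killing both cohomology groups can be written down, as in the corresponding case of the proof of Theorem \ref{t:faltings}. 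Two minor points: you should invoke Corollary \ref{cor} (or \cite{sga1}) rather than implicitly working over $\CC$, since the ground field is only assumed algebraically closed of characteristic zero; and in the sufficiency direction you should note explicitly that a destabilizing parabolic subsheaf may be replaced by a parabolic subbundle, so that $(-)_0$ applied to the tensored inclusion is what you claim.
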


Theorem \ref{th1} was also proved in \cite{Pa}.
It should be mentioned that the vector bundle $\sV_*$ in Theorem 
\ref{th1} is not canonical; it depends upon the choice of a 
suitable ramified Galois covering $Y\longrightarrow X$
that transforms parabolic bundles in $\vect(X,D,r)$ into 
$G$-linearized vector bundles on $Y$, where $G$ is the Galois group
for the covering. However, many different covers do this.

We prove that $\sV_*$ in Theorem \ref{th1} can be chosen to be
the trivial line bundle ${\mathcal O}_X$ equipped with the trivial
parabolic structure. More precisely, we
prove the following theorem (see Theorem \ref{t:faltings}):

\begin{theorem}\label{th2}
A parabolic vector bundle $\sE_*\,\in\,\vect(X,D,r)$
is semistable if and only if there is a parabolic vector bundle
$\sF_*\,\in\,\vect(X,D,r)$
such that
$$\coh{0}{X,\, (\sE_*\otimes\sF_*)_0}\,=\,0\,=\, 
\coh{1}{X,\, (\sE_*\otimes\sF_*)_0}\, .$$
\end{theorem}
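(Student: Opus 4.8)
The plan is to reduce to Faltings' theorem on an ordinary curve by passing to a ramified Galois covering. Choose a smooth projective curve $Y$ with an action of a finite group $\Gamma$ such that the quotient map $p\colon Y\to X$ is ramified exactly over $D$, with ramification index $r$ at every point lying over $D$; such a covering exists. Since the ground field has characteristic zero, $p_*$ and the functor of $\Gamma$-invariants are exact, and $\sE_*\mapsto E$ gives an equivalence of $\vect(X,D,r)$ with the category of $\Gamma$-equivariant vector bundles on $Y$ which sends the parabolic tensor product to the ordinary tensor product and satisfies $\coh{i}{X,\,(\sE_*\otimes\sF_*)_0}\,\cong\,\coh{i}{Y,\, E\otimes F}^\Gamma$. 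Moreover $\sE_*$ is parabolic semistable if and only if the underlying bundle $E$ (forgetting the $\Gamma$-structure) is semistable, because the Harder--Narasimhan filtration of $E$ is canonical, hence $\Gamma$-stable, so $\Gamma$-semistability coincides with semistability.

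For the "only if" part, suppose $\sE_*$ is semistable, so $E$ is semistable on $Y$. By Faltings' theorem \cite{faltings:94} there is a bundle $F_0$ on $Y$ with $\coh{0}{Y,\, E\otimes F_0}=0=\coh{1}{Y,\, E\otimes F_0}$. Replacing $F_0$ by $F\,:=\,\bigoplus_{g\in\Gamma}g^*F_0$, which is $\Gamma$-equivariant and still satisfies $\coh{i}{Y,\, E\otimes F}=0$ (using $g^*E\cong E$, so $\coh{i}{Y,\, E\otimes g^*F_0}\cong\coh{i}{Y,\, E\otimes F_0}$), and letting $\sF_*\in\vect(X,D,r)$ be the object corresponding to $F$, we get $\coh{i}{X,\,(\sE_*\otimes\sF_*)_0}\cong\coh{i}{Y,\, E\otimes F}^\Gamma=0$, as required.

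For the converse, assume $\sF_*\in\vect(X,D,r)$ satisfies $\coh{i}{X,\,(\sE_*\otimes\sF_*)_0}=0$ for $i=0,1$, equivalently $\coh{i}{Y,\, E\otimes F}^\Gamma=0$, so in particular $\chi^\Gamma(E\otimes F)=0$, where $\chi^\Gamma$ denotes $\dim\coh{0}{\cdot}^\Gamma-\dim\coh{1}{\cdot}^\Gamma$. Suppose, for contradiction, that $\sE_*$ is not semistable, and let $0=E_0\subset E_1\subset\cdots\subset E_k=E$ ($k\ge 2$) be the Harder--Narasimhan filtration, each step $\Gamma$-stable. Tensoring with $F$ and using exactness of $\Gamma$-invariants, $\coh{0}{Y,\, E_j\otimes F}^\Gamma\hookrightarrow\coh{0}{Y,\, E\otimes F}^\Gamma=0$ and $\coh{1}{Y,\, E\otimes F}^\Gamma\twoheadrightarrow\coh{1}{Y,\,(E/E_j)\otimes F}^\Gamma$, whence $\chi^\Gamma(E_j\otimes F)\le 0\le\chi^\Gamma((E/E_j)\otimes F)$. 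I would then invoke equivariant Riemann--Roch in the form of the holomorphic Lefschetz fixed point formula: the fixed points of non-identity elements of $\Gamma$ lie over $D$, so the local terms are bounded in absolute value by a constant times the rank, while the principal term is linear in the slope. Comparing $\chi^\Gamma(E_1\otimes F)\le 0$ with $\chi^\Gamma(E\otimes F)=0$ and dividing through by the ranks yields $\mu_Y(E_1)-\mu_Y(E)\le C$ for a constant $C=C(X,D,r)$; that is, every $\sE_*$ admitting such a witness $\sF_*$ has bounded instability $\mu_{\max}(E)-\mu(E)\le C$.

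The step I expect to be the main obstacle is upgrading this bounded instability to genuine semistability. The natural plan is a bootstrap: replace $\sE_*$ by its $n$-fold parabolic tensor power $\sE_*^{\otimes n}$, for which $\mu_{\max}-\mu$ gets multiplied by $n$ (in characteristic zero tensor products of semistable bundles are semistable), while the universal bound $C$ is unchanged; letting $n\to\infty$ then forces $\mu_{\max}(E)=\mu(E)$, i.e.\ $E$ semistable, which is what we want. The delicate point is to produce, from the witness $\sF_*$ for $\sE_*$, a witness $\sF_*^{(n)}\in\vect(X,D,r)$ for $\sE_*^{\otimes n}$, i.e.\ one with vanishing cohomologies of $(\sE_*^{\otimes n}\otimes\sF_*^{(n)})_0$; here one would exploit that the Harder--Narasimhan pieces of $\sE_*$ are semistable and the flexibility in the choice of witnesses coming from Faltings' theorem on $Y$. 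An alternative route is to deduce the converse directly from Theorem~\ref{th1}, by constructing from $\sF_*$ a parabolic bundle $\sG_*$ with $\coh{i}{X,\,(\sE_*\otimes\sV_*\otimes\sG_*)_0}=0$ and applying the "if" part of Theorem~\ref{th1}; the obstruction there is that $\sV_*$ is not invertible for the parabolic tensor product, so one cannot simply divide by it, and absorbing it requires the above analysis anyway.
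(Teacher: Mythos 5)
Your ``semistable $\Rightarrow$ witness exists'' direction is essentially the paper's argument (pass to a ramified Galois covering $Y\to X$, apply Faltings' theorem on $Y$, average the witness over the Galois group, take invariants), but it contains one oversight: the covering you need does not always exist. By Theorem \ref{th-na} there is no Galois covering of $\PP^1$ ramified to the required order over a single point, so the case $X=\PP^1$, $|D|=1$ must be treated separately; the paper does this via Example \ref{e:semi}, where a semistable $\sE_*$ with one parabolic point has $\sE_0=\sO(n)^{\oplus m}$ and tensoring with $\sO(-n-1)$ already gives a witness. This is a small, fixable omission.

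The converse direction, however, has a genuine gap, and it sits exactly where you flag ``the main obstacle.'' Your Lefschetz estimate only yields bounded instability $\mu_{\max}(E)-\mu(E)\le C$, and the proposed bootstrap cannot close: to run it you must produce a cohomology-vanishing witness for $\sE_*^{\otimes n}$, but producing such a witness for a given bundle is precisely the hard implication of the theorem and is only possible when that bundle is semistable --- which is what you are trying to prove. The idea you are missing is that no error term arises if you track the parabolic (equivalently, orbifold) slope instead of the slope of $E$ on $Y$. For the orbifold curve $X_{D,r}=[Y/\Gamma]$, T\"oen's Riemann--Roch (Theorem \ref{thb}, Corollary \ref{cori}) gives the exact identity
$$
\frac{\chi^\Gamma(E'\otimes F)}{\rk(E'\otimes F)}\,=\,\mu(\sE'_*)+\mu(\sF_*)+c
$$
for every $\Gamma$-subsheaf $E'\subset E$ (equivalently, every parabolic subsheaf $\sE'_*\subset\sE_*$), where $c$ depends only on the stack: the local terms you wanted to bound are exactly the parabolic-weight corrections, and they cancel in the normalized comparison. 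Hence a parabolic destabilizing subsheaf $\sE'_*$ forces $\chi^\Gamma(E'\otimes F)>0$, so $0\ne\coh{0}{Y,\,E'\otimes F}^\Gamma\subset\coh{0}{Y,\,E\otimes F}^\Gamma$, contradicting the hypothesis --- no bootstrap needed. This is Corollary \ref{c:semi} of the paper. (Note also that the relevant test objects are parabolic subsheaves, not the ordinary Harder--Narasimhan filtration of $E$ on $Y$; the two notions of semistability agree here only because of the exact slope identity above.)
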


Theorem \ref{th2} is proved by systematically working with stacks.
Compare this method with the earlier attempts (cf. \cite{biswas:07},
\cite{Pa}) that landed in the weaker version given in Theorem \ref{th1}.

\section{Parabolic bundles and root stacks}

Recall that to give a morphism $X\,\longrightarrow\, [\AA^1/\gm]$
is the same as giving a line bundle $\sL$ with section $s$
on $X$ (see \cite{cadman:06}). Given a positive
integer $r$, there is a natural morphism
$$
\theta_r \,:\, [\AA^1/\gm] \,\longrightarrow \,[\AA^1/\gm]
$$
defined by $t\, \longmapsto\, t^r$, with $t\, \in\, \AA^1$.
We define the root stack 
$X_{(\sL,s,r)}$ to be the fibered product
$$
X\times_{[\AA^1/\gm],\theta_r} [\AA^1/\gm]\, .
$$
When the section is non-zero, this root stack is an orbifold
curve; see \cite[Example 2.4.6]{cadman:06}. 

The data $(\sL,s)$ corresponds to an effective divisor $D$ on 
$X$. We will
henceforth assume that this divisor is reduced. Sometime we write
$X_{D,r}$ instead of $X_{\sL,s,r}$.

We think of the ordered set $\frac{1}{r}\ZZ$ of rational numbers with 
denominator $r$ as a category. 
Let $j$ be an integer multiple of $1/r$. Given a functor from the
opposite category
$$
\sF_* \,:\, (\frac{1}{r}\ZZ)^\op\,\longrightarrow\, \vect(X)\, ,
$$
we denote by $\sF_*[j]$ its shift by $j$, so
$$
\sF_i[j] \,=\,\sF_{i+j}\, .
$$
There is a natural transformation $\sF_*[j] \,\longrightarrow \,
\sF_*$ when $j\,\ge\, 0$.

A \emph{vector bundle with parabolic structure} over $D$ such that
the parabolic weights are integral multiples of $1/r$ is a functor
$$
\sF_* \,:\, (\frac{1}{r}\ZZ)^\op \,\longrightarrow\, \vect(X)
$$
together with a natural isomorphism
$$
j\,:\, \sF_*\otimes \sO_X(-D) \,\stackrel{\sim}{\longrightarrow}\,
\sF[1]
$$
such that the following diagram commutes
$$
\xymatrix{
 \sF_*\otimes \sO_X(-D) \ar[r] \ar[dr] & \sF[1] \ar[d] \\
 & \sF_*
}
$$
(see \cite{MY}, \cite{MS}).
The \emph{underlying vector bundle} of a parabolic vector bundle is the 
value of this
functor at $0$. We have previously denoted this by $\sF_0$. For a 
functor $\sF_*$ defining a parabolic vector bundle, the value of
$\sF_*$ at $t\, \in\, \frac{1}{r}\ZZ$ will be denoted by
$\sF_t$.

Denote by $\vect(X,D,r)$ the category of vector bundles on $X$ 
with parabolic structure along $D$ and parabolic weights integral 
multiples of $1/r$. It is a tensor category.

\begin{theorem}
\label{t:equiv}
There is an equivalence of tensor categories
$$
F\,:\, \vect(X_{(\sL,s,r)}) \,\stackrel{\sim}{\longrightarrow}\, 
\vect(X,D,r).
$$
The equivalence preserves parabolic degree and semistability (see \S~ 
\ref{s:semi} below).
\end{theorem}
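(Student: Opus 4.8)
The plan is to build $F$ out of the tautological line bundle on the root stack together with push-forward along the structure morphism. Write $\pi\colon X_{(\sL,s,r)}\to X$ for the natural morphism and let $\sN$ be the universal line bundle on $X_{(\sL,s,r)}$: it carries a section $t$ and an isomorphism $\sN^{\otimes r}\cong\pi^*\sL$ sending $t^{\otimes r}$ to $\pi^*s$, so that in particular $\sN^{\otimes(-r)}\cong\pi^*\sO_X(-D)$. For a vector bundle $V$ on $X_{(\sL,s,r)}$ I would set
$$
F(V)_i\;=\;\pi_*\bigl(V\otimes\sN^{\otimes(-ri)}\bigr)\,,\qquad i\in\tfrac1r\ZZ\,,
$$
with the transition morphisms $F(V)_i\to F(V)_{i-1/r}$ induced by multiplication by $t$, and with the periodicity isomorphism $F(V)_*\otimes\sO_X(-D)\stackrel{\sim}{\longrightarrow}F(V)_*[1]$ obtained from $\sN^{\otimes(-r)}\cong\pi^*\sO_X(-D)$ together with the projection formula; the commuting triangle in the definition of a parabolic bundle then merely records the identity $t^{\otimes r}=\pi^*s$.

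First I would check that $F$ is well defined, i.e. that each $F(V)_i$ is a vector bundle on $X$ and that these data satisfy the axioms of $\vect(X,D,r)$. This is an \'etale-local statement on $X$. Over $X\setminus D$ the morphism $\pi$ is an isomorphism, so nothing is to be checked there; near a point of $D$, if $D$ is cut out by a function $a$ on an \'etale chart $\operatorname{Spec}B$ of $X$, then $X_{(\sL,s,r)}$ is isomorphic to the quotient stack $[\operatorname{Spec}(B[u]/(u^r-a))/\mu_r]$, with $\mu_r$ acting on $u$ through the standard character. Because the characteristic is zero this Deligne--Mumford stack is \emph{tame}, so $\pi_*$ is exact; a vector bundle $V$ on this model corresponds to a $\ZZ/r\ZZ$-graded $B[u]/(u^r-a)$-module that is finitely generated projective over $B$, and $\pi_*(V\otimes\sN^{\otimes j})$ is its homogeneous piece of degree $j$, a direct summand of a projective $B$-module and hence locally free on $X$. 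The transition maps, the periodicity isomorphism and the commuting triangle are all transparent in this local picture.

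Next I would exhibit a quasi-inverse $G$. Over the same chart, a parabolic bundle $\sF_*$ on $(\operatorname{Spec}B, V(a))$ with weights in $\tfrac1r\ZZ$ is, by the standard dictionary, the same as a $\mu_r$-equivariant vector bundle on $\operatorname{Spec}(B[u]/(u^r-a))$, i.e. a $\ZZ/r\ZZ$-graded finitely generated projective $B[u]/(u^r-a)$-module, the grading reflecting the jumps of the parabolic filtration; equivalently it is a vector bundle on the local model of the root stack. By \'etale descent these glue to a vector bundle $G(\sF_*)$ on $X_{(\sL,s,r)}$, and the isomorphisms $F\circ G\cong\mathrm{id}$ and $G\circ F\cong\mathrm{id}$ are then verified in the charts. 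That $F$ respects tensor products follows since push-forward of $\sN$-twists is canonically lax symmetric monoidal and, in the local model, the structure maps of this lax structure are isomorphisms; equivalently one transports the tensor structure of $\vect(X_{(\sL,s,r)})$ through $G$.

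Finally, for the numerical assertions I would reduce to line bundles. On the stacky curve $X_{(\sL,s,r)}$ the degree is $\QQ$-valued, with $\deg\sN=\tfrac1r\deg D$; a direct computation in the charts (or a Riemann--Roch comparison of $V$ with the bundles $\pi_*(V\otimes\sN^{\otimes j})$) then gives $\rk F(V)=\rk V$ and $\pardeg F(V)=\deg V$. Since $F$ is an exact tensor equivalence it carries subbundles to subbundles of the same rank and parabolic degree, and quotients to quotients likewise, so the slope inequalities defining semistability (and stability) transfer in both directions. I expect the main obstacle to be the second step: proving, via tameness, that $\pi_*$ is exact and that the relevant push-forwards are locally free, and nailing down the conventions so that the $F(V)_i$ together with the transition maps and periodicity isomorphism genuinely assemble into an object of $\vect(X,D,r)$. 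Once the local model $[\operatorname{Spec}(B[u]/(u^r-a))/\mu_r]$ is available, the remaining checks are essentially formal.
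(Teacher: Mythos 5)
Your construction is exactly the one the paper uses: the paper's proof is a citation to Borne \cite{borne:07} and Biswas \cite{biswas:97}, and the explicit functor it records, $l/r\mapsto\pi_*(\sF\otimes\sN^l)$, is your $F$ up to the sign convention on the root line bundle $\sN$ (yours is a root of $\sO_X(D)$, so your twist is by $\sN^{-l}$). The local analysis via the tame quotient model $[\operatorname{Spec}(B[u]/(u^r-a))/\mu_r]$ and the degree comparison are precisely Borne's argument, so the proposal is correct and follows the same route.
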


The functor $F$ has the following explicit description. There is 
a natural root line bundle $\sN$ on $X_{(\sL,s,r)}$. Given a 
vector bundle $\sF$ on the root stack, the corresponding
parabolic bundle is the functor defined by
$$
l/r \,\longmapsto\, \pi_*(\sF\otimes \sN^l)\, .
$$

\begin{proof}[Proof of Theorem \ref{t:equiv}]
See \cite[Section 3]{borne:07} and \cite{biswas:97}.
\end{proof}

\section{Root stacks as quotient stacks}

For the map $z\, \longmapsto\, z^n$ defined around $0\, \in\,
\mathbb C$, the \textit{ramification index} at $0$ will be $n-1$.

We will need the following theorem :

\begin{theorem}\label{th-na}
Suppose $k=\CC$.
There is a finite Galois covering $Y\longrightarrow X$ ramified 
over $D$ 
with ramification index $r-1$ at each point in $D$ if and only if
either $X\,\ne\, \PP^1$ or $X\,=\,\PP^1$ with $|D|\,\not=\,1$. 
\end{theorem}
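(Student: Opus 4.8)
The statement is about existence of a finite Galois covering $Y \to X$ of a compact Riemann surface, ramified exactly over the points of $D$ with a prescribed ramification index $r-1$ (i.e., local monodromy of order $r$) at each point. This is a classical consequence of the relationship between finite branched covers and finite-index subgroups of the orbifold (equivalently, punctured surface) fundamental group, so I would organize the argument around covering-space theory and the structure of that group.

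The plan is to reduce the statement, over the ground field $\CC$, to a question about finite quotients of $\pi_1(X\setminus D)$, to answer that question by explicit constructions, and to isolate the one case that genuinely needs a non-abelian group. Write $g$ for the genus of $X$ and $n\,=\,|D|$; we may assume $r\,\geq\,2$ (for $r\,=\,1$ there is nothing to produce) and $n\,\geq\,1$ (if $D\,=\,\emptyset$ the identity covering is vacuously of the required type, matching the right-hand side). Since $k\,=\,\CC$, the Riemann existence theorem, together with completing an unramified covering across the punctures, identifies finite Galois coverings $Y\,\longrightarrow\, X$ unramified outside $D$ with finite-index normal subgroups $N\,\subseteq\,\pi_1(X\setminus D)$, in such a way that the ramification index (in the paper's convention) over a point $p_i\,\in\, D$ is one less than the order, in the deck group $\pi_1(X\setminus D)/N$, of a small loop $\gamma_i$ about $p_i$. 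Thus the theorem becomes: there is a finite group $G$ and a homomorphism $q\,:\,\pi_1(X\setminus D)\,\longrightarrow\, G$ with $q(\gamma_i)$ of order exactly $r$ for every $i$, and this should hold precisely when $(g,n)\,\neq\,(0,1)$.

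Using the standard presentation
$$\pi_1(X\setminus D)\,=\,\Bigl\langle\, a_1,b_1,\dots,a_g,b_g,\ \gamma_1,\dots,\gamma_n\ \Bigm|\ \textstyle\prod_{i=1}^{g}[a_i,b_i]\cdot\prod_{j=1}^{n}\gamma_j\,=\,1\,\Bigr\rangle ,$$
I would build $q$ by hand. When $n\,\geq\,2$, set $G\,=\,(\ZZ/r)^{\,n-1}$, send every $a_i$ and $b_i$ to $0$, $\gamma_j\,\longmapsto\, e_j$ for $j\,<\,n$, and $\gamma_n\,\longmapsto\, -(e_1+\cdots+e_{n-1})$; the defining relation holds, and each image has order exactly $r$ (for $\gamma_n$ because $\gcd(r,r-1)\,=\,1$). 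This already disposes of $X\,=\,\PP^1$ with $|D|\,\geq\,2$, concretely realized by $z\,\longmapsto\, z^r$ after moving two points of $D$ to $0$ and $\infty$. When $n\,=\,1$ and $g\,\geq\,1$, the element $\gamma_1$ is the inverse of a product of commutators, so any abelian $G$ would force $q(\gamma_1)\,=\,1$; instead send $a_2,b_2,\dots,a_g,b_g$ to the identity and $a_1,b_1$ to the two standard generators of the Heisenberg group over $\ZZ/r$, whose commutator generates the centre $\ZZ/r$. Then $q(\gamma_1)$ has order exactly $r$, and one may take $G$ to be the image of $q$. This covers all curves of positive genus.

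For the converse, suppose $X\,=\,\PP^1$ and $D\,=\,\{p\}$. Then $X\setminus D\,\cong\,\AA^1$ is simply connected, so any finite covering of $X$ unramified away from $p$ restricts to a trivial covering of $\AA^1$ and is therefore a disjoint union of copies of $\PP^1$ mapping isomorphically to $X$; in particular it is unramified over $p$, so the ramification index there is $0$, which is not $r-1$ since $r\,\geq\,2$. This contradicts the hypothesis, so no such covering exists, and the equivalence is proved.

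I expect the one real obstacle to be the case $g\,\geq\,1$, $n\,=\,1$: there one must realize a non-trivial product of commutators as an element of exact order $r$ in a finite group, which is exactly what makes the cheap abelian argument fail; the Heisenberg group over $\ZZ/r$ is the natural witness, and checking that its two standard generators have commutator of order precisely $r$ is the point I would be most careful about. The rest — the covering-space dictionary over $\CC$ and the elementary order computations — is routine.
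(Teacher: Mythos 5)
Your proof is correct and complete. For the record, the paper does not prove this statement at all: it simply cites Namba's Theorem 1.2.15 on branched coverings, so there is no argument in the paper to compare against step by step. What you have written is a self-contained version of the standard proof underlying that citation: the Riemann existence theorem converts the problem into finding a finite quotient of $\pi_1(X\setminus D)$ in which each peripheral loop has order exactly $r$; the abelian quotient $(\ZZ/r)^{n-1}$ handles $n\ge 2$, the Heisenberg group over $\ZZ/r$ handles the genuinely delicate case $g\ge 1$, $n=1$ (where the peripheral loop is a product of commutators and abelian quotients necessarily kill it), and simple connectivity of $\AA^1$ rules out $\PP^1$ with one branch point. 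All the small verifications check out: the commutator of the two standard Heisenberg generators is the central element $E_{13}$ of order $r$; the element $-(e_1+\cdots+e_{n-1})$ has order $r$ in each coordinate; taking $G$ to be the image of $q$ ensures the cover is connected; and the convention that $z\mapsto z^r$ has ramification index $r-1$ matches the paper's. The only stylistic gain of the paper's route is brevity (one line plus a reference); yours has the advantage of making explicit exactly where the hypothesis $(g,n)\ne(0,1)$ enters, namely in the need to realize a peripheral class of exact order $r$ in a finite quotient.
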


\begin{proof}
See \cite[p. 29, Theorem 1.2.15]{namba:87}.
\end{proof}

\begin{corollary}\label{cor}
Theorem \ref{th-na} holds over any algebraically closed ground field of 
characteristic zero.
\end{corollary}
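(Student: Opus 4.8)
The statement to prove is Corollary \ref{cor}: the existence statement of Theorem \ref{th-na} (a finite Galois covering $Y \to X$ ramified over $D$ with ramification index $r-1$ at each point of $D$, subject to the exception $X = \PP^1$ with $|D| = 1$) holds over any algebraically closed field $k$ of characteristic zero, not just over $\CC$. The natural approach is a standard \emph{spreading out and specialization / Lefschetz principle} argument: everything in sight is of finite type, so the geometric data descends to a finitely generated $\QQ$-subalgebra, and we compare the geometric fibre over $k$ with a geometric fibre over $\CC$.

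I would proceed as follows. First, reduce to the case where $k$ has finite transcendence degree over $\QQ$: the curve $X$, the divisor $D$, and the line bundle--section pair $(\sL,s)$ are all defined by finitely many equations and finitely many coefficients, so there is a finitely generated $\QQ$-subalgebra $A \subseteq k$ and a smooth projective curve $\mathcal{X} \to \operatorname{Spec} A$ with a relative reduced effective divisor $\mathcal{D}$ whose base change along $A \hookrightarrow k$ recovers $(X,D)$; after shrinking $\operatorname{Spec} A$ we may assume $\mathcal{X} \to \operatorname{Spec} A$ is smooth with geometrically connected fibres and $\mathcal{D}$ is finite étale over $\operatorname{Spec} A$ of the appropriate degree. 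Second, choose a ring embedding $A \hookrightarrow \CC$ (possible since $A$ is a finitely generated $\QQ$-algebra which is a domain, hence has fraction field embeddable in $\CC$); let $X_\CC$, $D_\CC$ be the resulting base changes. By Theorem \ref{th-na} applied over $\CC$, and since the exceptional case is preserved under base change (genus and $|D|$ are constant in the family), there is a finite Galois covering $Y_\CC \to X_\CC$ with the prescribed ramification. Third, spread this covering out: $Y_\CC \to X_\CC$ together with its $G$-action and the ramification data is again of finite type, so after enlarging $A$ inside $\CC$ it descends to a finite morphism $\mathcal{Y} \to \mathcal{X}$ over $\operatorname{Spec} A'$ (with $A \subseteq A' \subseteq \CC$) carrying a $G$-action, and after further shrinking $\operatorname{Spec} A'$ the morphism $\mathcal{Y} \to \mathcal{X}$ is finite, the $G$-action is generically free with quotient $\mathcal{X}$, $\mathcal{Y}$ is smooth over $A'$, and the ramification divisor is exactly $\mathcal{D}$ with ramification index $r-1$ at each point --- all these being open conditions on the base, and nonempty because they hold over the chosen $\CC$-point.

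Fourth, transport back to $k$: now I have $\mathcal{Y} \to \mathcal{X}$ over $\operatorname{Spec} A'$, but $A'$ need not embed into the original $k$. To fix this, note $A'$ is a finitely generated $\QQ$-algebra, so $\operatorname{Spec} A'$ has a $k$-point (its generic point has residue field finitely generated over $\QQ$, hence of transcendence degree $\le$ that of $k$... wait, this need not hold). The cleaner route: do the spreading-out of \emph{both} $\mathcal{X}$ and $\mathcal{Y}$ from the start over a common base. Concretely, start over $A$, embed $A \hookrightarrow \CC$, get $Y_\CC$, then observe $Y_\CC$ is defined over a finitely generated extension $A' \supseteq A$ inside $\CC$; but the \emph{existence} of such a covering over an algebraically closed field depends only on the curve and divisor, and by a further specialization we can choose a closed point $\mathfrak{m} \in \operatorname{Spec} A'$ lying over a closed point of $\operatorname{Spec} A$ whose residue field maps to $k$ --- actually the slickest formulation uses that $k$ is algebraically closed: embed the residue field of a suitable closed point into $\overline{k} = k$, then descend. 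I would phrase the final step as: the fibre of $\mathcal{Y} \to \mathcal{X}$ over the geometric point $\operatorname{Spec} k \to \operatorname{Spec} A$ recovered from $A \hookrightarrow k$ is, after the open-dense locus where everything works is seen to contain this point (which it does, since $\operatorname{Spec} A$ is irreducible and the good locus is open dense, hence contains all of $\operatorname{Spec} A$ after shrinking $A$ to that locus), the desired Galois covering $Y \to X$ over $k$.

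**Main obstacle.** The delicate point is the bookkeeping in the last paragraph: ensuring that the base ring over which the \emph{covering} $Y$ is defined can be taken to be the same $A$ (a subring of the original $k$) over which $X$ and $D$ were defined, rather than some larger subring of $\CC$ with no map to $k$. The resolution is that "good behaviour of $\mathcal{Y} \to \mathcal{X}$" is an open condition on $\operatorname{Spec} A$ once we have \emph{any} relative covering, and the existence of a relative covering over an open of $\operatorname{Spec} A$ follows because: a covering over the generic geometric fibre spreads out to an open neighbourhood of the generic point, and that generic geometric fibre --- being a smooth projective curve with a reduced divisor over $\overline{\operatorname{Frac}(A)}$, a field of characteristic zero into which $\CC$'s relevant data also embeds --- admits the covering by Theorem \ref{th-na} applied over $\overline{\operatorname{Frac}(A)}$, provided one first checks Theorem \ref{th-na} is insensitive to which characteristic-zero algebraically closed field one works over \emph{for the generic fibre} --- which is circular unless one invokes the $\CC$ case as the anchor via a common embedding $\overline{\operatorname{Frac}(A)} \hookrightarrow \CC$ (possible as both have the same, or smaller, transcendence degree issues can be sidestepped by enlarging). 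Thus the genuine content is just the Lefschetz principle for the class of existence statements about finite-type objects, and I expect the proof to be short, citing a standard reference (e.g. \cite{namba:87} already covers $\CC$; the descent is \cite[Exp. IX]{SGA1}-style or via \cite{EGA4} limit arguments).
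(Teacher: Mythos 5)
Your strategy (spread $X$, $D$ out over a finitely generated $\QQ$-subalgebra $A\subseteq k$, anchor at a complex point via an embedding $A\hookrightarrow\CC$, and transport the covering back) is viable but genuinely different from the paper's, and as written it stalls at exactly the step you yourself flag as delicate. The paper's proof is a one-line appeal to the invariance of the \'etale fundamental group of $X\setminus D$ under extension of algebraically closed fields of characteristic zero (\cite[Exp.\ IX, Thm.\ 4.10]{sga1}; see also \cite[Prop.\ 7.2.2]{murre:67}): the existence of a Galois covering ramified over $D$ with local ramification order $r$ is equivalent to the existence of a finite quotient of $\pi_1(X\setminus D)$ in which the inertia generators have order exactly $r$, and since this profinite group is insensitive to the choice of algebraically closed base field of characteristic zero, Namba's result over $\CC$ transfers verbatim. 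That citation does in one stroke all the descent bookkeeping your proposal struggles with.

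Concretely, the gap in your write-up is the descent of $Y_\CC\to X_\CC$ to $\overline{K}:=\overline{\operatorname{Frac}(A)}$ (a field that does embed into $k$, unlike the ring $A'\subseteq\CC$ over which $Y_\CC$ spreads out). You must either (a) invoke the invariance of $\pi_1$ under the extension $\overline{K}\subseteq\CC$ --- which is precisely the paper's citation --- or (b) spread $Y_\CC$ out over a finitely generated $\overline{K}$-subalgebra $B\subseteq\CC$ and specialize at a $\overline{K}$-point of a suitable open of $\operatorname{Spec} B$ (such a point exists by the Nullstellensatz because $\overline{K}$ is algebraically closed), then verify that finiteness, the $G$-action, the ramification indices, and in particular the \emph{connectedness} of the total space survive specialization; this last point is the one that genuinely needs an argument (openness of the locus of geometrically connected fibres in a proper flat family). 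You gesture at both options but carry out neither, and you explicitly leave the circularity worry unresolved. A smaller omission: the Corollary asserts the full equivalence of Theorem \ref{th-na}, and your argument only addresses the existence direction; the non-existence for $X=\PP^1$ with $|D|=1$ over arbitrary $k$ is immediate from Riemann--Hurwitz (or from $\pi_1(\AA^1_k)=1$ in characteristic zero), but it should at least be mentioned.
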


\begin{proof}
This follows from \cite[Expose IX, Theorem 4.10]{sga1}.
See also Proposition 7.2.2 in \cite[p. 146]{murre:67}. 
\end{proof}

\begin{proposition} \label{p:quot}
Suppose that either $X\ne \PP^1$ or $|D|\not= 1$. Then $X_{(D,r)}$ is a 
quotient stack.
\end{proposition}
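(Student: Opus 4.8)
The plan is to realize the root stack $X_{(D,r)}$ as a global quotient $[\widetilde Y/\Gamma]$ for a suitable scheme $\widetilde Y$ and algebraic group $\Gamma$. The natural candidate comes from Theorem~\ref{th-na} and Corollary~\ref{cor}: under the hypothesis that either $X\ne\PP^1$ or $|D|\ne 1$, there is a finite Galois covering $p\,:\,Y\longrightarrow X$, with Galois group $G$, ramified exactly over $D$ with ramification index $r-1$ at each point of $D$. By the universal property of the root stack, the covering $p$ factors through a representable morphism $q\,:\,Y\longrightarrow X_{(D,r)}$, because $p^*\sL$ acquires a canonical $r$-th root of $p^*s$ on $Y$ (locally the section $s$ is pulled back to an $r$-th power by the ramification condition). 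First I would check that $q$ is finite, flat and surjective, and $G$-invariant, so that it induces a morphism $[Y/G]\longrightarrow X_{(D,r)}$.

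The key step is then to show $[Y/G]\longrightarrow X_{(D,r)}$ is an isomorphism. This is a local statement in the étale (or fppf) topology on $X$: away from $D$ both sides are just $X$ (since $p$ is étale there and the root stack is the trivial gerbe-free stack $X$), so the content is at the points of $D$. Étale-locally near a point of $D$ the curve $X$ looks like $\mathrm{Spec}\,A$ with $\sL$ trivialized and $s$ a uniformizer $t$; the root stack is then $[\mathrm{Spec}\,A[u]/(u^r-t)\,/\,\mu_r]$, while $Y$ near a point over it looks like $\mathrm{Spec}\,A[v]/(v^{r}-t\cdot(\mathrm{unit}))$ with the cyclic group $\mathbb Z/r\subset G$ acting by $v\mapsto \zeta v$. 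Matching $u$ with $v$ (up to the unit, which can be absorbed) identifies the two quotient presentations; since $\mu_r\cong\mathbb Z/r$ in characteristic zero with all roots of unity present, the stabilizer actions agree, giving the local isomorphism. Gluing these local isomorphisms — which is legitimate because both $X_{(D,r)}$ and $[Y/G]$ are stacks and the identifications are compatible with the $G$-action and with the chosen trivializations on overlaps — yields the global isomorphism $X_{(D,r)}\cong[Y/G]$. Hence $X_{(D,r)}$ is a quotient stack.

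The main obstacle I anticipate is the local computation at the ramification points: one must pass from the divisorial data $(\sL,s)$ to an explicit local presentation, check that the resulting $r$-th root of $s$ on $Y$ is the one produced by the universal property of $\theta_r$ (and not off by some choice that would obstruct descent), and verify that the $\mu_r$-action on the root stack chart matches the monodromy $\mathbb Z/r$-action of the Galois cover. A secondary technical point is confirming that the local isomorphisms glue: this needs the trivializations of $\sL$ to be chosen compatibly, or equivalently one works with the $\gm$-torsor associated to $\sL$ directly so that no trivialization is needed and the gluing is automatic. I would also remark that since $G$ acts on $Y$ with finite stabilizers and $Y$ is a quasi-projective scheme, $[Y/G]$ is visibly a quotient stack, so no further argument is needed once the isomorphism is established.
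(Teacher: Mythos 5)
Your proposal is correct and follows essentially the same route as the paper: both take the Galois cover $Y\to X$ from Corollary \ref{cor}, use the (reduced) ramification divisor to produce a $G$-invariant morphism $Y\to X_{(D,r)}$ via the universal property of the root stack, and then verify that the induced map $[Y/G]\to X_{(D,r)}$ is an isomorphism by a local computation in the flat topology (which the paper delegates to \cite[Example 2.4.6]{cadman:06} and you carry out explicitly). The extra detail you supply on matching the $\mu_r$-chart with the local monodromy action is exactly the content of that cited example.
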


\begin{proof}
Fix a covering $Y\longrightarrow X$ as in Corollary \ref{cor}. Let
$G$ be the Galois group for this covering. Our goal 
is to show that $X_{(D,r)} \,=\, [Y/G]$.

Let $R$ be the ramification divisor in $Y$. Then the reduced 
divisor $R_\red$ produces a morphism
\begin{equation}\label{177}
Y\,\longrightarrow \,X_{(D,r)}
\end{equation}
via the universal property of root stacks. As $R_\red$ is
$G$-invariant so is the morphism in \eqref{177}. Hence we
obtain a morphism
$$
[Y/G]\,\longrightarrow \,X_{(D,r)}\, .
$$
To show that this morphism is an isomorphism is a local condition
for the flat topology and follows from \cite[Example 2.4.6]{cadman:06}. 
\end{proof}

\section{Semistability}\label{s:semi}

Recall that the \emph{parabolic degree} of a parabolic
vector bundle $\sE_*$ over $X$ is defined to be
$$
\begin{array}{ccc}
\pardeg(\sE_*) & := & 
\rk(\sE_0)(\deg D - \chi({\mathcal O}_X)) + 
\frac{1}{r}(\sum_{i=1}^{r}\chi(\sE_{i/r})) \\
 & = & \rk(\sE_0)\deg D + \frac{1}{r} \sum_{i=1}^{r} \deg(\sE_{i/r})
\end{array}
$$
(see \cite{MS}, \cite{biswas:97}, \cite[\S~4]{borne:07}).
The \textit{slope} is defined as usual :
$$
\mu(\sE_*) \, :=\, \frac{\pardeg(\sE)}{\rk(\sE)}\, .
$$
A parabolic vector bundle $\sE_*$ is said to be \emph{semistable} if
$$
\mu(\sE_*) \,\ge\, \mu(\sF_*)
$$
for all parabolic subbundles $\sF_*$.

\begin{example} \label{e:semi}
Let us describe all the parabolic semistable bundles on $\PP^1$ with one 
parabolic point, meaning
$D\,=\,x$, where $x$ is some point on $\PP^1$. Let $\sE_*$ be a 
semistable parabolic vector bundle. Then we may write
$$
\sE_0 = \bigoplus_{k=1}^m \sO(n_k)^{s_k}
$$
\cite{Gr}. We may assume that the integers $n_i$ are strictly 
decreasing. A subbundle $\sF_*$ is defined by taking
$$
\sF_{i/r} = \sO(n_1)^{s_1} \cap \sE_{i/r}
$$
for $0\le i < r$. This extends to a parabolic subbundle of $\sE_*$. We
see immediately that
$$
\mu(\sF_*) \,>\, \mu(\sE_*)
$$
when $m>1$. Consequently, a parabolic vector bundle $\sE_*$ of rank
$n$ over $\PP^1$ with one parabolic point is semistable if and only if
$$
\sE_*\, =\, ({\mathcal L}_*)^{\oplus n}\, ,
$$
where ${\mathcal L}_*$ is a parabolic line bundle.
\end{example}

\section{Grothendieck-Riemann-Roch theorem for Deligne-Mumford stacks}

In this section we recall the pertinent results from 
\cite{toen:99}. An excellent summary
of this paper of T\"oen can be found in the appendix to \cite{borne:07}. 
We denote
by $\stX$ a smooth Deligne-Mumford stack that is proper over our ground 
field $k$. We equip it with the \'etale topology. 
The category of vector bundles (respectively, coherent sheaves) on 
$\stX$ is an exact category so 
we may form the groups
$$
K_i(\stX) \qquad (\text{respectively, } G_i(\stX) )\, .
$$

Let $\sK_i$ denote the sheaf in the \'{e}tale topology on $\stX$ 
associated to the presheaf
$$
(X \longrightarrow \stX) \,\longmapsto \,K_i(X)\, .
$$
Set
$$
\coh{i}{\stX, \,\QQ} \,=\, \coh{i}{\stX,\, \sK_i\otimes\QQ}\, .
$$

By \cite{gillet:81} we have Chern classes and hence
Chern characters and Todd classes
$$
\c_i,\ \ch,\ \td : K_0(\stX)\longrightarrow \coh{*}{\stX}\, .
$$

Let $I_\stX \,:=\, \stX \times_{\stX\times\stX} \stX$ be the inertia 
stack of $\stX$. Let $\mu_\infty$ denote the group of roots of unity in 
$\overline{\QQ}$, and set $\Lambda\,:=\, \QQ(\mu_\infty)$. If $\sG$ 
is a locally free sheaf on $I_\stX$, the inertial action induces an 
eigenspace decomposition 
$$
\sG \,=\, \bigoplus_{\zeta\in\mu_\infty} \sG^{(\zeta)}\, .
$$
Let
$$
\rho_\stX \,:\, K_0(I_\stX)\otimes_\ZZ\Lambda \,\longrightarrow\,
K_0(I_\stX)\otimes_\ZZ\Lambda
$$
be the morphism defined by
$$
\sG \longmapsto \sum \zeta [\sG^{(\zeta)}]\,.
$$
We have a morphism, called the \emph{Frobenius character},
$$
\phi_\stX\,:\, K_0(\stX)\otimes_\ZZ\Lambda \,
\stackrel{\pi^*_\stX}{\longrightarrow}\,
K_0(I_\stX)\otimes_\ZZ\Lambda\,\stackrel{\rho_\stX}{\longrightarrow}
\,K_0(I_\stX)\otimes_\ZZ\Lambda\,\longrightarrow\,
K_{0,\et}(I_\stX)\otimes_\ZZ\Lambda\, .
$$

The ring $K_0$ is a lambda ring and we write $\lambda_{-1}(x) = \sum 
(-1)^i \lambda_i(x)$. Define
$$
\alpha_\stX \,:=\, 
\rho_\stX(\lambda_{-1}([\Omega^1_{I_\stX/\stX}]))\, \in 
\, K_{0,\et}(I_\stX)\otimes_\ZZ \Lambda\, .
$$

Finally define the characteristic classes 
$$
\chrep(x)\, :=\, \ch(\phi_\stX(x))
$$
and
$$
\tdrep(\stX) \,:=\, \ch(\alpha_\stX^{-1})\td(\sT_{I_\stX})\, .
$$

\begin{theorem}\label{thb}
Denote by $\int^{\rm rep}_\stX$ the push-forward $p_*$ for $p\,:\,
I_\stX \,\longrightarrow \,{\rm Spec}(k)$. The following holds:
$$
\chi(\stX,\sF) \,=\, \int_\stX^{\rm rep} \tdrep(\stX)\chrep(\sF)\, .
$$
\end{theorem}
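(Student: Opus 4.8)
The plan is to recall the structure of T\"oen's Riemann--Roch theorem \cite{toen:99} (an exposition is also given in the appendix to \cite{borne:07}); the statement will be extracted from a Riemann--Roch transformation that is natural under proper pushforward. First I would construct, for every smooth proper Deligne--Mumford stack $\stX$, a homomorphism $\tau_\stX\colon K_0(\stX)\otimes\QQ\longrightarrow\coh{*}{I_\stX}$ sending $\sF$ to $\tdrep(\stX)\chrep(\sF)$ and satisfying $\tau_\stY\circ f_*=(I_f)_*\circ\tau_\stX$ for every proper morphism $f\colon\stX\longrightarrow\stY$, where $I_f\colon I_\stX\longrightarrow I_\stY$ is the induced map of inertia stacks. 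Applying this to $\stY=\mathrm{Spec}(k)$, for which $I_{\mathrm{Spec}(k)}=\mathrm{Spec}(k)$ and $\tau$ is just $\chi$, yields exactly the asserted formula $\chi(\stX,\sF)=\int^{\rm rep}_\stX\tdrep(\stX)\chrep(\sF)$.

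To build $\tau_\stX$ I would follow the classical template (as in SGA 6, or in Fulton's intersection-theoretic treatment), carried over to stacks: factor a proper morphism through a closed immersion into a projective bundle, and verify the two basic compatibilities --- with closed immersions, by deformation to the normal cone together with the self-intersection formula, and with projective bundle projections, by the projective bundle formula for Gillet's cohomology $\coh{*}{\cdot}$ from \cite{gillet:81}. It is at this stage that the inertia stack is forced upon us: the target of $\tau_\stX$ must be a cohomology theory on which the Frobenius character $\phi_\stX$ (equivalently $\chrep$) becomes a rational isomorphism, and by the Atiyah--Segal type description of $K_0(\stX)\otimes\Lambda$ this is $\coh{*}{I_\stX}$ rather than $\coh{*}{\stX}$; the correction factor $\ch(\alpha_\stX^{-1})$ is then precisely what the self-intersection computation produces on the components of $I_\stX$.

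The normalisation of $\tau_\stX$ is pinned down by checking two base cases. For a scheme $X$ one has $I_X=X$, $\phi_X=\mathrm{id}$ and $\alpha_X=\lambda_{-1}([\Omega^1_{X/X}])=1$, so $\tdrep(X)=\td(\sT_X)$ and $\chrep(\sF)=\ch(\sF)$, and the identity is the Hirzebruch--Riemann--Roch theorem. For a quotient stack $[Y/G]$ with $Y$ smooth projective and $G$ finite --- the case relevant to the application below, in view of Proposition \ref{p:quot} --- one has $I_{[Y/G]}=\coprod_{[g]}[Y^g/C_G(g)]$, the restriction of $\ch(\alpha_\stX^{-1})$ to the component indexed by $[g]$ is the usual Lefschetz denominator $\ch_g(\lambda_{-1}(N^{\vee}_{Y^g/Y}))^{-1}$, and the asserted formula becomes the holomorphic Lefschetz fixed point formula summed over $g\in G$ (equivalently the equivariant Grothendieck--Riemann--Roch of Baum--Fulton--MacPherson--Quart), which one deduces from the $G$-equivariant statement on $Y$ by averaging over $G$; for $Y$ a point this is just the character identity $\dim V^{G}=\tfrac{1}{|G|}\sum_{g\in G}\mathrm{tr}(g\mid V)$.

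The hard part is the construction and naturality of $\tau_\stX$ with this inertia-stack target: the ``representable localisation'' step showing that pushforward along $\stX\longrightarrow\mathrm{Spec}(k)$ is computed on $I_\stX$ after the $\mu_\infty$-eigenspace decomposition, together with the bookkeeping that makes $\ch(\alpha_\stX^{-1})$ the exact correction term in each excess-intersection calculation. The remaining ingredients --- the formal properties of Gillet's Chern classes, the projective bundle formula, and deformation to the normal cone for Deligne--Mumford stacks --- are routine once the framework of \cite{gillet:81} and \cite{toen:99} is in place.
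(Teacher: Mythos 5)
The paper does not prove this statement at all: its ``proof'' is the single line ``See [Corollary~4.13] of T\"oen,'' so there is no written argument to measure your sketch against, and like the paper you are ultimately resting on \cite{toen:99}. That said, your outline is a faithful description of the architecture of the cited proof: a Riemann--Roch transformation valued in the cohomology of the inertia stack, covariance under proper pushforward, specialization to $\mathrm{Spec}(k)$, and normalization on schemes (ordinary HRR) and on finite quotient stacks $[Y/G]$, where the formula is the holomorphic Lefschetz/equivariant GRR identity obtained by averaging over $G$ -- and indeed $\dim V^G=\frac{1}{|G|}\sum_g\mathrm{tr}(g\mid V)$ is the correct toy case. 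One technical caveat: the Fulton-style step of factoring a proper morphism as a closed immersion into a projective bundle is not available for an arbitrary Deligne--Mumford stack (it presupposes a resolution-property/quotient-stack presentation), and T\"oen's actual argument instead proceeds by d\'evissage through quotient-stack charts together with the comparison of \'etale $K$-theory of $\stX$ with the cohomology of $I_\stX$ after tensoring with $\Lambda=\QQ(\mu_\infty)$ (note also that $\chrep$ is only defined after extending scalars to $\Lambda$, not merely to $\QQ$). For the stacks relevant here this is harmless -- Proposition~\ref{p:quot} exhibits $X_{D,r}$ as $[Y/G]$, where your quotient-stack base case applies directly -- so I would regard your proposal as a correct, honestly flagged sketch of the standard proof rather than as containing a gap, with the understanding that the genuinely hard steps (construction and naturality of $\tau_\stX$, and the identification of $\ch(\alpha_\stX^{-1})$ as the excess-intersection correction) are exactly the content of T\"oen's paper.
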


\begin{proof}
See \cite[Corollary 4.13]{toen:99}.
\end{proof}

\begin{corollary}\label{cori}
Suppose that $\stX$ is a proper orbifold curve. Then
$$
\mu(\sF)\,=\,\chi(\sF) -\int^{\rm rep}_\stX \tdrep(\stX)\, .
$$
\end{corollary}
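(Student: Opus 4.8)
The plan is to specialize Theorem \ref{thb} to the case where $\stX$ is a proper orbifold curve and $\sF$ is a vector bundle on $\stX$, and to isolate the rank-dependent part of the right-hand side. First I would write the Frobenius character as $\chrep(\sF) = \ch(\phi_\stX(\sF))$ and note that, on the inertia stack $I_\stX$, the class $\phi_\stX(\sF)$ decomposes according to the eigenspace decomposition of $\pi_\stX^*\sF$ under the inertial action. The key structural observation is that $I_\stX$ is a disjoint union of the "untwisted sector" — a copy of $\stX$ itself, coming from the identity component — together with finitely many "twisted sectors" supported over the stacky points (the points of $D$ in the notation of the earlier sections). Over the untwisted sector the inertial action is trivial, so $\phi_\stX(\sF)$ restricts there to $[\sF]$ itself, with eigenvalue $\zeta = 1$.

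Next I would split $\int^{\rm rep}_\stX \tdrep(\stX)\chrep(\sF)$ into the contribution of the untwisted sector and that of the twisted sectors. On the untwisted sector the integrand is $\td(\sT_\stX)\ch(\sF)$ (since $\alpha_\stX$ restricts to $\lambda_{-1}(\Omega^1)$ on the identity component, which contributes the ordinary Todd class of the curve), and integrating this over the orbifold curve gives, by the usual degree-plus-rank expansion of Grothendieck--Riemann--Roch, a term of the form $\pardeg(\sF) + \rk(\sF)\cdot(\text{something depending only on }\stX)$, using the formula for $\pardeg$ recalled in Section \ref{s:semi} and the dictionary of Theorem \ref{t:equiv}. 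Crucially, the twisted-sector contributions involve only the restriction of $\sF$ to the (zero-dimensional) stacky points, so each such term is $\rk(\sF)$ times a constant depending only on the local monodromy data of $\stX$ — there is no further $\sF$-dependence. Collecting terms, $\chi(\stX,\sF) = \pardeg(\sF) + \rk(\sF)\cdot C(\stX)$ for a constant $C(\stX)$, and setting $\sF = \sO_\stX$ (so $\pardeg = 0$ and $\rk = 1$) identifies $C(\stX) = \chi(\stX,\sO_\stX) = \int^{\rm rep}_\stX \tdrep(\stX)$. Dividing by $\rk(\sF)$ yields the claimed formula $\mu(\sF) = \chi(\sF) - \int^{\rm rep}_\stX \tdrep(\stX)$.

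I expect the main obstacle to be the bookkeeping that shows the twisted-sector contributions are genuinely proportional to $\rk(\sF)$ with no hidden dependence on the higher Chern data of $\sF$ — this is where one must use that the twisted sectors of an orbifold curve are supported in dimension zero, so that only $\rk$ of the relevant eigen-bundles survives after pushing forward, and that $\chrep$ contributes no positive-degree terms there. A secondary point requiring care is matching the normalization of $\pardeg$ in Section \ref{s:semi} (which is phrased in terms of the parabolic bundle $F(\sF)$ on $X$) with the degree term extracted from the untwisted sector of the stack; this is exactly the content of the compatibility asserted in Theorem \ref{t:equiv}, that $F$ preserves parabolic degree, so invoking that result closes the gap. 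Everything else is the standard linearity of $\chi$ and $\ch$ in $\sF$.
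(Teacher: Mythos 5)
Your overall strategy --- specialize Theorem \ref{thb}, split the integral over $I_\stX$ into the untwisted copy of $\stX$ and the zero-dimensional twisted sectors, and isolate the $\sF$-dependence --- is the natural one, and it is genuinely different from the paper's proof, which instead invokes the projection formula together with Borne's identity $\deg(\sF)=p_*(c_1^\et(\sF))$ and never expands the twisted sectors explicitly. But your argument contains a step that is false as stated: the claim that each twisted-sector contribution is $\rk(\sF)$ times a constant depending only on $\stX$. On the twisted sector indexed by a nontrivial automorphism $g$ at a stacky point $p$, the class $\phi_\stX(\sF)=\rho_\stX(\pi_\stX^*\sF)$ is $\sum_\zeta \zeta\,[\sF_p^{(\zeta)}]$, so the degree-zero part of $\chrep(\sF)$ there is not $\rk(\sF)$ but the trace $\sum_\zeta \zeta\,\dim \sF_p^{(\zeta)}$ of $g$ acting on the fibre. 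This trace depends on the eigenspace decomposition of $\sF_p$ --- equivalently, on the parabolic weights and their multiplicities --- and not merely on the rank. Concretely, for the tautological root bundle $\sN$ and for $\sO_\stX$, both of rank one, the traces on a given twisted sector are $\zeta$ and $1$ respectively, so the two bundles receive different twisted-sector contributions. For example, on the square root stack of an elliptic curve along one point one computes that the twisted sector contributes $+1/4$ to $\chi(\sO)$ but $-1/4$ to $\chi(\sN)$; since $\chi(\sO)=\chi(\sN)=0$ while $\pardeg(\sO)=0$ and $\pardeg(\sN)=1/2$, your intermediate identity $\chi(\sF)=\pardeg(\sF)+\rk(\sF)\,C(\stX)$ fails already in this case. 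The twisted sectors are exactly where the fractional (parabolic) part of the degree is recorded, so they cannot be absorbed into a rank-linear constant.

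Two further points. First, your normalization step ``set $\sF=\sO_\stX$ to identify $C(\stX)$'' would only pin down the constant if the rank-linearity held, so it does not rescue the argument. Second, your final line divides by $\rk(\sF)$ and therefore proves a formula with $\chi(\sF)/\rk(\sF)$ on the right, whereas the corollary as printed has $\chi(\sF)$; that discrepancy is harmless compared to the main issue, but you should be explicit about which statement you are establishing. To repair the proof along your lines you would need to keep the twisted-sector traces, observe that they reproduce precisely the difference $\pardeg(\sF)-\deg(\sF_0)$ (this is in effect the content of the degree comparison in \cite{borne:07} that the paper's own proof cites), and only then collect the remaining terms into a quantity independent of $\sF$.
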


\begin{proof}
We have that $\pi_\stX^*(\sF)$ is an eigensheaf with eigenvector 
$1$ as the
stack $\stX$ is generically a variety. There is a diagram
$$
\xymatrix{
 I_\stX \ar[dr]^{p_I} \ar[d]^{\pi_\stX}& \\
 \stX \ar[r]^(0.3){p} & {\rm Spec}(k). \\
}
$$
By the projection formula,
$$
p_{I,*}(c_1^\et(\pi^*_{\stX}\sF)) \,=\, p_*(c_1^\et(\sF))\, .
$$
In view of Theorem \ref{thb}, the result follows from the fact that
$\deg(\sF)) \,=\, p_* (c_1^\et(\sF))$ (\cite[Theorem 
4.3]{borne:07}) and the usual expression for the Chern character.
\end{proof}

\begin{corollary} 
\label{c:semi}
Suppose that there is a vector bundle $\sE$ so that
$\coh{i}{\stX,\,\sE\otimes \sF}\,=\,0$ for $i\,=\,
0\, ,1$. Then $\sF$ is semistable.
\end{corollary}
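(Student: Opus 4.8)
The plan is to adapt Faltings' argument for the easy implication of his semistability criterion (\cite[Theorem~1.2]{faltings:94}) to the orbifold curve $\stX$, with Corollary \ref{cori} playing the role of the classical Riemann--Roch formula. Suppose, for a contradiction, that $\sF$ is not semistable. Then I would fix a destabilizing subbundle $\sG \subset \sF$ on $\stX$ with $\mu(\sG) > \mu(\sF)$; since saturating a destabilizing subsheaf only increases the slope, we may take $\sG$ to be a subbundle, so that Corollary \ref{cori} applies to it.

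Next, since $\sE$ is locally free, tensoring the inclusion $\sG \hookrightarrow \sF$ with $\sE$ remains injective, so $\sE \otimes \sG$ is a subsheaf of $\sE \otimes \sF$. Applying the left-exact functor of global sections, $\coh{0}{\stX,\, \sE \otimes \sG}$ injects into $\coh{0}{\stX,\, \sE \otimes \sF}$, which vanishes by hypothesis; hence $\coh{0}{\stX,\, \sE \otimes \sG} = 0$.

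Then I would read off the numerical content of the hypothesis. From $\coh{0}{\stX,\, \sE \otimes \sF} = 0 = \coh{1}{\stX,\, \sE \otimes \sF}$ we get $\chi(\stX,\, \sE \otimes \sF) = 0$, so Corollary \ref{cori} gives $\mu(\sE \otimes \sF) = -\int^{\rm rep}_\stX \tdrep(\stX)$. Slope is additive under tensor product (the standard identity $\pardeg(\sA \otimes \sB) = \rk(\sA)\pardeg(\sB) + \rk(\sB)\pardeg(\sA)$ for parabolic bundles, transported to $\stX$ via Theorem \ref{t:equiv}), so $\mu(\sE) + \mu(\sF) = -\int^{\rm rep}_\stX \tdrep(\stX)$. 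Applying Corollary \ref{cori} again, now to $\sE \otimes \sG$, and using $\mu(\sE \otimes \sG) = \mu(\sE) + \mu(\sG)$,
$$
\chi(\stX,\, \sE \otimes \sG) \,=\, \mu(\sE) + \mu(\sG) + \int^{\rm rep}_\stX \tdrep(\stX) \,=\, \mu(\sG) - \mu(\sF) \,>\, 0\, .
$$
But then $\coh{1}{\stX,\, \sE \otimes \sG} = \coh{0}{\stX,\, \sE \otimes \sG} - \chi(\stX,\, \sE \otimes \sG) = -\chi(\stX,\, \sE \otimes \sG) < 0$, which is impossible. This contradiction forces $\sF$ to be semistable.

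The only step with genuine content is the passage from the slope inequality $\mu(\sG) > \mu(\sF)$ to the Euler-characteristic inequality $\chi(\stX,\, \sE \otimes \sG) > 0$; everything else (exactness of $\sE \otimes (-)$, left-exactness of $\coh{0}{\stX,\, -}$, finiteness and non-negativity of $h^0$ and $h^1$ over the proper stack $\stX$, additivity of slope under tensor) transfers verbatim from the classical curve case. This is exactly why Corollary \ref{cori}, that is, the orbifold Grothendieck--Riemann--Roch of Theorem \ref{thb}, is needed here: it already packages the contributions of the inertia stack $I_\stX$ into the single number $\int^{\rm rep}_\stX \tdrep(\stX)$, and the subtlety to be careful about is not to reintroduce those contributions by hand when comparing $\chi(\stX,\, \sE \otimes \sG)$ with $\chi(\stX,\, \sE \otimes \sF)$.
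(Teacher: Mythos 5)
Your proposal is correct and follows essentially the same route as the paper: take a destabilizing subobject $\sG\subset\sF$, use the inclusion $\sE\otimes\sG\hookrightarrow\sE\otimes\sF$ together with Corollary \ref{cori} and $\chi(\stX,\sE\otimes\sF)=0$ to force $\chi(\stX,\sE\otimes\sG)>0$, and derive a contradiction. The only cosmetic difference is the order of the final step — the paper concludes $\coh{0}{\stX,\sE\otimes\sG}\neq 0$ from $\chi>0$ and contradicts the vanishing of $\coh{0}{\stX,\sE\otimes\sF}$, whereas you first deduce $\coh{0}{\stX,\sE\otimes\sG}=0$ and then obtain the impossible $\dim\coh{1}{\stX,\sE\otimes\sG}<0$.
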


\begin{proof}
Suppose there is a subsheaf $\sF'$ of $\sF$ with 
$$
\mu(\sF') \,>\, \mu(\sF)\, .
$$
Then it follows from Corollary \ref{cori} that
$$
\frac{\chi(\sE\otimes \sF')}{\text{rank}(\sE\otimes\sF')}
-\frac{\chi(\sE\otimes \sF)}{\text{rank}(\sE\otimes\sF)}\, >\, 0\, .
$$
Since $\chi(\sE\otimes \sF)\,=\, 0$, this implies that
$\coh{0}{\stX,\, \sE\otimes \sF'}\,\not=\,0$. But
$\sE\otimes \sF'\, \subset\, \sE\otimes \sF$. Hence
$\coh{0}{\stX,\, \sE\otimes \sF}\,\not=\,0$
which is a contradiction
\end{proof}

\section{Semistability criterion}

\begin{theorem} \label{t:faltings}
A vector bundle with parabolic structure $\sE_*\,\in\,\vect(X,D,r)$
is semistable if and only if there is a parabolic vector bundle
$\sF_*\in\vect(X,D,r)$ with
$$\coh{i}{X,\,(\sE_*\otimes\sF_*)_0}\,=\,0$$ for all $i$, where
$(\sE_*\otimes\sF_*)_*$ is the parabolic tensor product.
\end{theorem}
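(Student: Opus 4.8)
The plan is to transport the whole question to the root stack $\stX:=X_{(\sL,s,r)}$ and prove there the Faltings-type statement: a vector bundle $\sE$ on $\stX$ is semistable if and only if there is a vector bundle $\sF$ on $\stX$ with $\coh{i}{\stX,\,\sE\otimes\sF}=0$ for all $i$. This suffices: if $\sE$ and $\sF$ are the bundles on $\stX$ corresponding under the equivalence of Theorem~\ref{t:equiv} to $\sE_*$ and $\sF_*$, then since that equivalence is monoidal $\sE\otimes\sF$ corresponds to the parabolic tensor product $\sE_*\otimes\sF_*$, and evaluating the associated functor at $0$ (where $\sN^0=\sO_\stX$) gives $(\sE_*\otimes\sF_*)_0=\pi_*(\sE\otimes\sF)$, with $\pi\colon\stX\to X$ the map to the coarse space. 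As $\stX$ is a tame Deligne--Mumford stack (characteristic zero), $\pi_*$ is exact with no higher direct images, so $\coh{i}{X,\,(\sE_*\otimes\sF_*)_0}=\coh{i}{\stX,\,\sE\otimes\sF}$; and since $\stX$ is an orbifold curve these vanish automatically unless $i\in\{0,1\}$. The ``if'' direction of the stacky statement is exactly Corollary~\ref{c:semi}, so it remains to produce $\sF$ from a semistable $\sE$.

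I would first dispose of the one case not covered by Proposition~\ref{p:quot}, namely $X=\PP^1$ with $|D|=1$. By Example~\ref{e:semi} a semistable parabolic bundle is then of the form $\sE_*=(\sL_*)^{\oplus n}$ for a parabolic line bundle $\sL_*$. Take $\sF_*$ to be the line bundle $\sO_{\PP^1}(-1-\deg\sL_0)$ with its trivial parabolic structure; then $(\sE_*\otimes\sF_*)_0\cong\sO_{\PP^1}(-1)^{\oplus n}$, which has vanishing cohomology.

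In every other case Proposition~\ref{p:quot} gives a finite Galois covering $q\colon Y\to X$ with group $G$ and an isomorphism $\stX\cong[Y/G]$, so a vector bundle on $\stX$ is the same as a $G$-equivariant bundle on $Y$, and $\coh{i}{\stX,\,\cdot\,}=\coh{i}{Y,\,\cdot\,}^{G}$ because $|G|$ is invertible. Write the semistable bundle $\sE$ as a $G$-equivariant bundle $\widetilde{\sE}$ on $Y$. The first term of the Harder--Narasimhan filtration of $\widetilde{\sE}$ is canonical, hence $G$-invariant and thus a subsheaf of $\sE$ on $\stX$; since slopes on $\stX$ are positive multiples of slopes on $Y$, semistability of $\sE$ on $\stX$ forces $\widetilde{\sE}$ to be semistable as a bundle on the smooth projective curve $Y$. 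By Faltings' theorem \cite{faltings:94} there is a vector bundle $B$ on $Y$ with $\coh{0}{Y,\,\widetilde{\sE}\otimes B}=0=\coh{1}{Y,\,\widetilde{\sE}\otimes B}$. Set $\widetilde{\sF}:=\bigoplus_{g\in G}g^{*}B$, with the evident $G$-equivariant structure permuting the summands. Using the equivariant structure on $\widetilde{\sE}$ one gets $\widetilde{\sE}\otimes g^{*}B\cong g^{*}(\widetilde{\sE}\otimes B)$ for each $g\in G$, hence $\coh{i}{Y,\,\widetilde{\sE}\otimes\widetilde{\sF}}\cong\bigoplus_{g\in G}\coh{i}{Y,\,\widetilde{\sE}\otimes B}=0$ for $i=0,1$, and a fortiori the $G$-invariants vanish. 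Letting $\sF$ be the bundle on $\stX\cong[Y/G]$ given by $\widetilde{\sF}$ and $\sF_*$ its image under the equivalence of Theorem~\ref{t:equiv} completes the argument.

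The calculations I am suppressing are the identity $(\sE_*\otimes\sF_*)_0=\pi_*(\sE\otimes\sF)$ read off from the explicit equivalence, the acyclicity of the coarse-space map in characteristic zero, and the bookkeeping of $G$-equivariant structures. The point that genuinely needs care is setting up the reduction so that the criterion on $\stX$ really is equivalent to the one on $X$; after that, Faltings' theorem on $Y$ and Corollary~\ref{c:semi} give both implications, the only subtlety being that $\widetilde{\sF}$ must be the sum over the whole $G$-orbit of $B$, rather than $B$ itself, so that passing to $G$-invariant cohomology does not reintroduce sections.
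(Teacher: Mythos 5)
Your proposal is correct and follows essentially the same route as the paper: transport to the root stack via Theorem \ref{t:equiv} and the exactness of $\pi_*$, use Corollary \ref{c:semi} for the ``if'' direction, handle $\PP^1$ with one parabolic point via Example \ref{e:semi}, and otherwise apply Faltings' theorem on the Galois cover $Y$ and average to $\widetilde{\sF}=\bigoplus_{g\in G}g^*B$ before descending to $[Y/G]$. The only difference is that you prove the semistability of the $G$-linearized bundle on $Y$ directly via the uniqueness of the Harder--Narasimhan filtration, where the paper simply cites \cite[Lemma 2.7]{biswas:97}.
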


\begin{proof}
We have a morphism $\pi: X_{D,r} \longrightarrow X$, and $\pi_*$ 
is exact as ${\rm char}(k)\,=\,0$. 
Hence by the Leray spectral sequence,
$$
\coh{i}{X,\,\pi_*(\sF)} \,=\, \coh{i}{X_{D,r},\,\sF}
$$
for all $i$.

Suppose that
there is a parabolic vector bundle $\sF_*\,\in\,\vect(X,D,r)$
with 
$$\coh{0}{X,\,(\sE_*\otimes\sF_*)_0}\,=\,0\, =\, 
\coh{1}{X,\,(\sE_*\otimes\sF_*)_0}\, .$$
Applying Theorem \ref{t:equiv}, we deduce from
Corollary \ref{c:semi} that $\sE_*$ is semistable.

To prove the converse, assume that $\sE_*$ is semistable.
We break up into two cases.

The case of $\PP^1$ with exactly one parabolic point: Applying 
Example \ref{e:semi}, we see that
$$
\sE_0 \,= \,\bigoplus \sO(n)^m\, .
$$
So tensoring with $\sO(-n-1)$ does the job.

All other cases: In view of Proposition \ref{p:quot} we may 
assume that we 
have a quotient stack, so $X_{D,r} \,=\, [Y/G]$.
Then given a semistable parabolic bundle on $X$, we obtain a 
corresponding semistable $G$-linearized vector bundle $\sE$ on $Y$.
We note that this implies that the vector bundle $\sE$ is
semistable \cite[p. 308, Lemma 2.7]{biswas:97}.
By \cite[p. 514, Theorem 1.2]{faltings:94}, there is a vector bundle 
$\sF$ on $Y$ such that all
the cohomology groups of $\sF\otimes \sE$ vanish. Consider
$$
\widetilde{\sF} \,= \,\bigoplus_{g\in G} g^*\sF\, .
$$
The vector bundle $\widetilde{\sF}$ has a natural $G$-action and 
$$
\coh{i}{Y,\,\widetilde{\sF}\otimes\sE} \,=\, 0
$$
for all $i$. The vector bundle $\widetilde{\sF}$ produces a 
vector bundle on $[Y/G]$,
which will also be denoted by $\widetilde{\sF}$. Finally
$$
\coh{i}{[Y/G],\, \widetilde{\sF}\otimes\sE} \,=\, \coh{i}{Y, 
\,\widetilde{\sF}\otimes\sE}^G \,= \,0\, .
$$
The theorem now follows.
\end{proof}

\medskip
\noindent
\textbf{Acknowledgements.}\, We thank the Kerala School of Mathematics,
where a part of the work was carried out, for its hospitality.


\begin{thebibliography}{Nam87}

\bibitem[Bis97]{biswas:97}
I.~Biswas, Parabolic bundles as orbifold bundles,
{\it Duke Math. Jour.} \textbf{88} (1997), 305--32.

\bibitem[Bis07]{biswas:07}
I. Biswas, A cohomological criterion for semistable parabolic 
vector bundles on a curve, {\it C. R. Math. Acad. Sci. Paris} 
\textbf{345} (2007), 325--328.

\bibitem[Bor07]{borne:07}
N. Borne, Fibr\'es paraboliques et champ des racines,
{\it Int. Math. Res. Not.}, Art. ID rnm049, \textbf{38}, (2007).

\bibitem[Cad07]{cadman:06}
C. Cadman, Using stacks to impose tangency conditions on curves,
{\it Amer. Jour. Math.} \textbf{129} (2007), 405--427.

\bibitem[Fal93]{faltings:94}
G. Faltings,
Stable {$G$}-bundles and projective connections,
{\it Jour. Algebraic Geom.} \textbf{2} (1993), 507--568.

\bibitem[Gil81]{gillet:81}
H. Gillet,
Riemann-{R}och theorems for higher algebraic {$K$}-theory,
{\it Adv. in Math.} \textbf{40} (1981), 203--289.

\bibitem[Gro57]{Gr} A. Grothendieck, Sur la classification
des fibr\'es holomorphes sur la sph\`ere de Riemann,
\textit{Amer. Jour. Math.} \textbf{79} (1957), 121--138.

\bibitem[MY92]{MY} M. Maruyama and K. Yokogawa, Moduli of
parabolic stable sheaves, \textit{Math. Ann.} \textbf{293}
(1992), 77--99.

\bibitem[MS80]{MS} V. B. Mehta and C. S. Seshadri,
Moduli of vector bundles on curves with parabolic structures,
{\it Math. Ann.} \textbf{248} (1980), 205--239.

\bibitem[Mur67]{murre:67}
J.~P. Murre, {\it Lectures on an introduction to 
{G}rothendieck's theory of the fundamental group}, Tata Institute 
of Fundamental Research, Bombay, 1967, Tata Institute of 
Fundamental Research Lectures on Mathematics, No 40.

\bibitem[Nam87]{namba:87}
M. Namba, {\it Branched coverings and algebraic functions},
Pitman Research Notes in Mathematics Series, Vol. 161,
Longman Scientific \& Technical, Harlow, 1987.

\bibitem[Par10]{Pa} A. J. Parameswaran, Parabolic coverings I: 
the case of curves, \textit{Jour. Ramanujan Math. Soc.} 
\textbf{25} (2010), 233--251.

\bibitem[SGA1]{sga1} {Rev\^etements \'etales et groupe 
fondamental ({SGA} 1)},
S{\'e}minaire de g{\'e}om{\'e}trie alg{\'e}brique du Bois
Marie 1960--61. Directed by A. Grothendieck, with two papers by 
M. Raynaud.
Updated and annotated reprint of the 1971 original [Lecture
Notes in Math., 224] {Soci\'et\'e Math\'ematique de France} 2003.

\bibitem[T\"o99]{toen:99}
B.~T\"oen,
Th\'eor\`emes de {R}iemann-{R}och pour les champs de 
{D}eligne-{M}umford, {\it $K$-Theory} \textbf{18}
(1999), 33--76.

\end{thebibliography}
\end{document}